\documentclass[11pt]{article}

\usepackage[T1]{fontenc}
\usepackage{lmodern}
\usepackage{microtype}
\usepackage{amsmath,amssymb,amsthm}
\usepackage[margin=1in]{geometry}
\usepackage[hidelinks]{hyperref}

\newtheorem{theorem}{Theorem}[section]

\newtheorem{lemma}[theorem]{Lemma}
 
\theoremstyle{remark}

\newcommand{\F}{\mathbb F}
\newcommand{\wt}{\operatorname{wt}}
\newcommand{\tr}{\operatorname{tr}}

\newcommand{\cR}{\mathcal R}
\newcommand{\cS}{\mathcal S}
\newcommand{\cC}{\mathcal C}

\title{The Quadratic Easy Coefficients Conjecture\\
via Finite-Type Shifts and Zeta Functions}
\author{Thomas W. Cusick$^a$ \footnote{email: cusick@buffalo.edu~ORCID 0000-0002-0087-8855}
\vspace{.5cm}\\
$^a$\small Department of Mathematics,
\small University at Buffalo\\
\small 244 Mathematics Bldg.,  Buffalo, NY 14260\\}
\date{September 1, 2026}

\begin{document}

\maketitle

\begin{abstract}
We prove the Quadratic Easy Coefficients Conjecture stated as Conjecture 1
in T. W. Cusick, \emph{Recursions for quadratic rotation symmetric
functions weights}, Discrete Applied Mathematics 378 (2026), 93--101.
For an arbitrary finite sum of quadratic monomial rotation symmetric
Boolean functions, we identify the recurrent part of the rules
matrix with a signed binary de Bruijn transfer matrix $B$.  We then give a
one-step presentation of the finite-type shift associated with the Boolean
function in the symbolic-dynamics construction of Chirvasitu and Cusick.
Fourier transformation in an auxiliary $\F_2$ coordinate decomposes the
adjacency matrix of this shift into an unsigned de Bruijn block and the
signed block $B$.  Consequently the dynamical zeta function is
\[
 \zeta_{X_f}(z)=\frac{1}{\det(I-z\cR(f))},
\]
where $\cR(f)$ is the rules matrix.  This equality identifies, with their
algebraic multiplicities, the characteristic values, supplied by symbolic
dynamics, with the roots of the characteristic polynomial of the rules
matrix.  The desired easy coefficients formula follows from the trace of
$B^n$.  We also prove nonsingularity and justify the unique backward
extension of the weight recurrence.
\end{abstract}

\section{Statement of the conjecture and the theorem}

Let
\begin{equation}\label{original-function}
 f_n=\sum_{i=1}^{m}(1,t_i)_n
\end{equation}
be a quadratic rotation symmetric (RS) Boolean function, where the $t_i$ are distinct  
integers $>1$. Define
\begin{equation}\label{T-and-M}
 T=\{t_i:1\leq i\leq m\},
 \qquad M=\max_{t\in T}(t-1).
\end{equation}
For $n\geq 2M+1$ this means explicitly
\begin{equation}\label{explicit}
 f_n(x_1,\ldots,x_n)
 =\sum_{r=1}^{n}\sum_{t\in T}x_rx_{r+t-1},
\end{equation}
where the subscripts are reduced modulo $n$ and the sum is taken in
$\F_2$.

There is a general theory of \emph{rules matrices} for RS functions of any degree
described in \cite{Carx, Cus18}. This theory is much more complicated than
the degree $2$ case considered here, so we do not use those references but instead
rely on \cite[Section 2 and Appendix]{Cus26}. 
The \emph{expanded rules matrix},  as described in detail in  \cite[Lemma 3]{Cus26}, is
used to compute the weight recursions for the quadratic RS functions $f_n$
in \eqref{original-function}.  Then reducing this expanded matrix to the (smaller) rules matrix
gives a nonsingular square matrix $\cR(f)$ of size $2^M+1$, with the
distinguished characteristic root $2$.  A general form of the Easy
Coefficients Conjecture (ECC) (for RS functions of arbitrary degree) is stated as
Conjecture 6.1 in reference~\cite{CC22}.  In the quadratic case, write the
remaining roots of its characteristic polynomial, counted with their full
algebraic multiplicities, as
\begin{equation}\label{eta-list}
 \eta_1,\eta_2,\ldots,\eta_{2^M}.
\end{equation}
Then we have the following statement of the quadratic ECC: 
\begin{theorem}[Quadratic Easy Coefficients Conjecture]\label{thm:main}
Extend the weight recursion for the functions in
\eqref{original-function} backwards from $n=2M+1$ to $n=1$, and denote
the resulting sequence by $w_1,w_2,\ldots$.  Thus
\[
 w_n=\wt(f_n)\qquad(n\geq 2M+1).
\]
Then
\begin{equation}\label{ECC}
 \boxed{
 w_n=2^{n-1}-\frac12
 \left(\eta_1^n+\eta_2^n+\cdots+\eta_{2^M}^n\right),
 \qquad n\geq1.}
\end{equation}
Thus the coefficient belonging to the distinguished root $2$ is
$\frac12$, and the coefficient belonging to every occurrence of every
other characteristic root is $-\frac12$.
\end{theorem}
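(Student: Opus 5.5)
We compute the weight directly through a trace formula and then match it with the spectrum of the rules matrix. For $n\ge 2M+1$ we write
\[
 2^n-2\wt(f_n)=\sum_{x\in\F_2^n}(-1)^{f_n(x)}.
\]
By \eqref{explicit}, $f_n$ is a sum of local terms $x_r\sum_{t\in T}x_{r+t-1}$, each depending only on the window $(x_r,\dots,x_{r+M})$ of length $M+1$. We therefore encode a cyclic word $x$ as a closed walk of length $n$ in the binary de Bruijn graph on states $(x_r,\dots,x_{r+M-1})\in\F_2^M$, with an edge for each extension $x_{r+M}$. We weight the edge out of window $(x_r,\dots,x_{r+M})$ by $(-1)^{x_r\sum_{t\in T}x_{r+t-1}}$. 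This gives a signed $2^M\times 2^M$ matrix $B$ with
\[
 \sum_{x}(-1)^{f_n(x)}=\tr(B^n),\qquad n\ge M,
\]
since for $n\ge M$ every closed walk of length $n$ corresponds bijectively to a cyclic word. Hence $w_n=2^{n-1}-\frac12\tr(B^n)$ for $n\ge 2M+1$.

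The next step is to identify the spectrum of $B$ with the list \eqref{eta-list}. We show that the characteristic polynomial of $\cR(f)$ equals $(x-2)\det(xI-B)$, which is equivalent to the zeta identity in the abstract. Following the outline there, we take the one-step presentation of the finite-type shift $X_f$ of Chirvasitu--Cusick. Its alphabet consists of a de Bruijn state together with an auxiliary $\F_2$ coordinate that tracks the running value of $f$. Its adjacency matrix $A$ commutes with translation in the auxiliary coordinate. Fourier transforming in that coordinate block-diagonalizes $A$ as $D\oplus B$, where $D$ is the unsigned de Bruijn matrix. We have $\det(I-zD)=1-2z$, because $D$ has rank-one-like structure with trace of $D^n$ equal to $2^n$. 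Thus
\[
 \det(I-zA)=(1-2z)\det(I-zB).
\]
We then compare $A$ with the expanded rules matrix of \cite[Lemma 3]{Cus26}. The rows and columns of $A$ should be indexed by the same data as the rules, and the reduction to $\cR(f)$ should discard only transient states (nilpotent part). Then $\det(I-z\cR(f))=\det(I-zA)$, so the nonzero eigenvalues agree with multiplicity. We also need $\det B\ne0$, since $B$ is $\pm$ a permutation-type matrix up to the de Bruijn structure; this is checked via $B^M$ or a direct block computation. Nonsingularity of $\cR(f)$, of size $2^M+1$, then forces the full list $\eta_1,\dots,\eta_{2^M}$ to be exactly the eigenvalues of $B$, so $\tr(B^n)=\sum\eta_i^n$.

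Finally, the backward extension. Both sides of \eqref{ECC} satisfy the linear recurrence whose characteristic polynomial is that of $\cR(f)$. Its constant term is nonzero by nonsingularity, so the recurrence can be run uniquely backwards. The two sides agree on $2^M+1$ consecutive values $n\ge 2M+1$, hence they agree for all $n\ge1$.

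The main obstacle is the middle step: matching $A$, or its recurrent part, precisely with $\cR(f)$ as defined via \cite[Section 2 and Appendix]{Cus26}, including multiplicities. I would first try to exhibit an explicit conjugacy between the recurrent block of the expanded rules matrix and $2\oplus B$, and fall back on equality of traces $\tr(\cR(f)^n)=2^n+\tr(B^n)$ for all $n$, which determines the nonzero spectrum.
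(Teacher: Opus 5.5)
Your trace identity is correct: your edge sign $x_r\sum_{t\in T}x_{r+t-1}$ differs from the paper's $x_{r+M}\sum_{t\in T}x_{r+M-t+1}$ by a coboundary, so your $B$ is conjugate to the paper's by a diagonal $\pm1$ matrix. Your backward-extension argument also matches the paper's.

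\textbf{The gap.} The gap is the step you flag yourself and leave open: the factorization $\det(xI-\cR(f))=(x-2)\det(xI-B)$ with full multiplicities. This is exactly what separates the conjecture from the distinct-roots case already proved in \cite{Cus26}, so it cannot be deferred. The Fourier/zeta computation does not supply it. $\zeta_{X_f}$ is defined from the weights, so $\det(I-zA)=(1-2z)\det(I-zB)$ is only your trace identity repackaged, via $\tr(A^n)=2^n+\tr(B^n)$. It says nothing about $\cR(f)$.

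\textbf{Why comparing $A$ with the rules matrix fails.} Your proposed comparison targets the wrong matrix. The graph of $A$ is $2$-in, $2$-out regular, so $A$ has no zero rows and no transient vertices. Its $2^M-1$ zero eigenvalues come from the nilpotent part of the unsigned block $U$, and no deletion of transient states removes them. So "reduce $A$ to $\cR(f)$ by discarding transient states" cannot work. The expanded rules matrix is quite different: it is indexed by $\prod_{t\in T}\F_2^{t-1}$ plus one distinguished index, and it has signed entries. The root $2$ of $\cR(f)$ comes from that distinguished final row and column, not from an unsigned de Bruijn block.

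Your fallback $\tr(\cR(f)^n)=2^n+\tr(B^n)$ is given no proof either. Knowing only that the weights satisfy the recurrence of $\cR(f)$ places the eigenvalues of $B$ among its roots, but does not control multiplicities.

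\textbf{What is missing.} You need a direct structural analysis of the expanded rules matrix, with $B$ rather than $A$ as the comparison object.
\begin{enumerate}
\item Rewrite equations (13)--(15) of the Appendix of \cite{Cus26} in shift-register form. Its top-left block then satisfies $W_{\Sigma_\varepsilon(u),u}=(-1)^{\varepsilon\sum_{t\in T}u^{(t)}_0}$ and vanishes elsewhere.
\item Show that the synchronized states $\iota(a)$, $a\in\F_2^M$, form an invariant set. Here each shorter register is the corresponding suffix of the longest one. Every path enters this set within $M$ steps, and every state in it lies on a cycle. Hence repeated deletion of zero rows removes exactly the states outside it.
\item Check that $W$ restricted to this core is $B^T$.
\item Use the fact that the final column is zero apart from the entry $2$.
\end{enumerate}
This gives $\cR(f)=\begin{pmatrix}B^T&0\\ c&2\end{pmatrix}$. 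The characteristic polynomial factorization and the multiplicity statement follow immediately. Nonsingularity follows via $BB^T=2I$, which your sign convention also satisfies, so it can replace your vague "$B^M$ or block computation" remark. With this identification, the zeta-function detour is not needed for the theorem at all.
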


The fact that the roots in \eqref{eta-list} are counted with
characteristic polynomial multiplicity is essential.  The characteristic
polynomial can have repeated roots even though the minimal polynomial is
squarefree.

Applications of the ECC are given in \cite{Cus24} and \cite{Cus26}.

\section{The signed de Bruijn transfer matrix}

A \emph{state} is a binary word
$a=(a_0,\ldots,a_{M-1})\in\F_2^M$, regarded as a record of $M$
consecutive binary digits.  For a state $a$ and
$\varepsilon\in\F_2$, define
\begin{equation}\label{sigma}
 \sigma_\varepsilon(a)
 =(a_1,a_2,\ldots,a_{M-1},\varepsilon).
\end{equation}
Define a $2^M\times2^M$ integer matrix $B=B_T$ whose rows and columns are
indexed by the words in $\F_2^M$ by
\begin{equation}\label{B-definition}
 B_{a,b}=
 \begin{cases}
 \displaystyle
 (-1)^{\varepsilon\sum_{t\in T}a_{M-t+1}},
 &b=\sigma_\varepsilon(a),\quad \varepsilon\in\F_2,\\[2mm]
 0,&\text{otherwise}.
 \end{cases}
\end{equation}
The exponent is evaluated in $\F_2$.

A transition deletes the first digit, shifts the remaining digits one place,
and appends a new
digit $\varepsilon$.  When $\varepsilon=1$, its sign records all quadratic
products completed by the newly appended digit.  The matrix $B$ is a
signed binary de Bruijn transfer matrix.

\begin{lemma}\label{lem:B-orthogonal}
The matrix $B$ satisfies
\begin{equation}\label{B-orthogonal}
 BB^T=2I.
\end{equation}
Consequently $B$ is nonsingular, $B/\sqrt2$ is orthogonal, $B$ is
diagonalizable over $\mathbb C$, and every eigenvalue of $B$ has absolute
value $\sqrt2$.
\end{lemma}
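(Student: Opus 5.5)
We will compute the entries of $BB^T$ directly from the definition \eqref{B-definition}. For two states $a,a'\in\F_2^M$,
\[
 (BB^T)_{a,a'}=\sum_{b}B_{a,b}B_{a',b}.
\]
A term is nonzero only if $b$ is a successor of both $a$ and $a'$, so $b=\sigma_\varepsilon(a)=\sigma_{\varepsilon'}(a')$. Since $\sigma_\varepsilon(a)=(a_1,\ldots,a_{M-1},\varepsilon)$, this forces $\varepsilon=\varepsilon'$ and $a_i=a'_i$ for $1\le i\le M-1$. Hence $a$ and $a'$ can differ only in the first digit $a_0$.

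\emph{Diagonal entries.} For $a=a'$, row $a$ has exactly two nonzero entries, at $\sigma_0(a)$ and $\sigma_1(a)$. These are distinct, because they differ in the last coordinate; when $M=1$ they are the two distinct one-letter words. Both entries are $\pm1$, so the diagonal entry is $1+1=2$.

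\emph{Off-diagonal entries.} Let $a'=a+e_0$, meaning $a'$ is $a$ with the digit $a_0$ flipped; all other pairs give $0$. The common successors are $b=\sigma_0(a)$ and $b=\sigma_1(a)$. For $\varepsilon=0$ both signs are $+1$, so this term contributes $+1$. For $\varepsilon=1$ the contribution is
\[
 (-1)^{\sum_{t\in T}a_{M-t+1}+\sum_{t\in T}a'_{M-t+1}}.
\]
The two exponent sums differ only in terms with $M-t+1=0$, that is $t=M+1$. Since $M=\max_{t\in T}(t-1)$, the value $t=M+1$ lies in $T$, and it occurs exactly once because the $t_i$ are distinct. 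So the exponents differ by exactly $a_0+a'_0=1$, and the $\varepsilon=1$ term is $-1$. The total is $1-1=0$. This cancellation is the one step where the hypotheses matter: the index $M-t+1$ ranges over $1,\ldots,M-1$ for $t<M+1$ and hits $0$ exactly for the maximal $t$. It is exactly why $M$ is defined as a maximum in \eqref{T-and-M}. Also $t>1$ ensures $M-t+1\le M-1$, so every index is a valid coordinate.

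\emph{Consequences.} From $BB^T=2I$ we get $\det(B)^2=2^{2^M}\neq0$, so $B$ is nonsingular. Then $Q=B/\sqrt2$ satisfies $QQ^T=I$, so $Q$ is a real orthogonal matrix. In particular $Q$ is normal, hence unitarily diagonalizable over $\mathbb C$, and so is $B=\sqrt2\,Q$. Each eigenvalue $\lambda$ of $Q$ has $|\lambda|=1$: if $Qv=\lambda v$ with $v\neq0$, then $\|v\|=\|Qv\|=|\lambda|\|v\|$. Therefore every eigenvalue of $B$ has absolute value $\sqrt2$.
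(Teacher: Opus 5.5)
Your proof is correct and follows essentially the same route as the paper. Each row has squared norm $2$, and rows with different suffixes have disjoint supports. For two rows differing only in $a_0$, the $t=M+1$ term flips exactly the $\varepsilon=1$ sign, so their inner product is $1-1=0$; you spell out the consequences of $BB^T=2I$ in more detail than the paper, which simply calls them immediate.
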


\begin{proof}
Every row of $B$ has two nonzero entries, both equal to $1$ or $-1$, so its
squared norm is $2$.  The support of row $a$ is determined by the suffix
$(a_1,\ldots,a_{M-1})$.  Hence two rows having different suffixes have
disjoint supports.

Suppose two distinct rows have the same suffix.  Their indices then differ
only in $a_0$.  Since $M+1\in T$ (because  $M+1=\max ~T$), the term with $t=M+1$ in the exponent in
\eqref{B-definition} is $\varepsilon a_0$.  For the transition with
$\varepsilon=0$, the two row entries have product $1$.  For the transition
with $\varepsilon=1$, changing $a_0$ changes exactly one sign, so the two
entries have product $-1$.  The inner product of the two rows is therefore
$1-1=0$.  This proves \eqref{B-orthogonal}.  The remaining assertions
are immediate.
\end{proof}

\section{Reduction of the expanded  rules matrix}

We next identify $B$ with the  rules matrix described in \cite[Section 2]{Cus26}.
 This is the concrete bridge between the rules matrix construction
and the dynamical zeta function.

In the expanded quadratic construction, the monomial $(1,t)_n$ contributes
a binary shift register of length $t-1$.  Before adjoining the distinguished
final row and column, the expanded state space is therefore
\begin{equation}\label{expanded-state-space}
 \cS=\prod_{t\in T}\F_2^{t-1}.
\end{equation}
Write a state as
\[
 u=\bigl(u^{(t)}:t\in T\bigr),
 \qquad
 u^{(t)}=(u^{(t)}_0,u^{(t)}_1,\ldots,u^{(t)}_{t-2}).
\]
For $\varepsilon\in\F_2$, define
\begin{equation}\label{expanded-shift}
 \Sigma_\varepsilon(u)^{(t)}
 =\bigl(u^{(t)}_1,\ldots,u^{(t)}_{t-2},\varepsilon\bigr).
\end{equation}

\begin{lemma}[{Shift-register form of \cite[Appendix]{Cus26}}] \label{lem:appendix}
After a fixed simultaneous permutation of rows and columns, the top-left
part $W$ of the expanded rules matrix has the entries
\begin{equation}\label{W-entry}
 W_{\Sigma_\varepsilon(u),u}
 =(-1)^{\varepsilon\sum_{t\in T}u^{(t)}_0},
 \qquad \varepsilon\in\F_2,
\end{equation}
and all other entries are zero.
\end{lemma}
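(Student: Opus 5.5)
This lemma is a translation of the Appendix of \cite{Cus26} into shift-register language. The plan is to recall that construction explicitly and check that, after relabelling, its transitions and signs are exactly \eqref{W-entry}.

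\textbf{Step 1: recover the construction in \cite{Cus26}.} The expanded rules matrix is built by processing the variables $x_1,x_2,\ldots$ of \eqref{explicit} one at a time. At each stage the data needed to continue are, for each $t\in T$, the last $t-1$ variable values. These are exactly the values that can still be paired with a future variable through a monomial $x_rx_{r+t-1}$. Tracking these separately for each monomial $(1,t)_n$ gives one binary register $u^{(t)}\in\F_2^{t-1}$ per $t$. The top-left block, before the distinguished row and column are adjoined, is therefore indexed by $\cS$ in \eqref{expanded-state-space}. I will fix the simultaneous permutation as the bijection between the row and column labels used in \cite{Cus26} (binary strings in the order given there) and the tuples $u$. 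The ordering is irrelevant once it is fixed consistently on rows and columns.

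\textbf{Step 2: transitions.} Appending a new variable value $\varepsilon$ updates every register by dropping its oldest entry $u^{(t)}_0$ and appending $\varepsilon$. This is exactly $\Sigma_\varepsilon$ in \eqref{expanded-shift}. So from column $u$ there are nonzero entries only in the rows $\Sigma_0(u)$ and $\Sigma_1(u)$. Because the entry index is $W_{\Sigma_\varepsilon(u),u}$, the matrix acts on column vectors; this orientation convention must be matched to the one in \cite{Cus26}. All other entries are zero, which gives the support claim.

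\textbf{Step 3: signs.} The weight is computed through $\sum(-1)^{f_n}$. When the new digit $\varepsilon$ is appended, the monomials completed are $u^{(t)}_0\varepsilon$ for $t\in T$: the oldest stored digit in register $t$ lies exactly $t-1$ positions back. Their contribution to the character is
\[
(-1)^{\varepsilon\sum_{t\in T}u^{(t)}_0},
\]
and this is precisely the entry recorded in the Appendix of \cite{Cus26}.

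\textbf{Main obstacle.} The hard part is bookkeeping rather than mathematics. The rules in \cite{Cus26} are written as explicit rows of a matrix with their own indexing. I must check that the indexing is consistent between rows and columns, so that the relabelling is a simultaneous permutation, and confirm that the row/column orientation (transpose or not) agrees with \eqref{W-entry}. I would settle both by matching the case of a single monomial $T=\{t\}$ against the Appendix table, and then noting that the general case is a product of independent registers sharing the same appended digit $\varepsilon$, with the signs multiplying.
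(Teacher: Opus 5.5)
Your Steps 1--3 prove something other than what the lemma asserts. They show that a ``one register per monomial'' transfer matrix, with each completed product $u^{(t)}_0\varepsilon$ contributing a sign, has the form \eqref{W-entry} and would compute $\sum_x(-1)^{f_n(x)}$. The lemma, however, is a statement about a specific, explicitly defined integer matrix: the one given by equations (13)--(15) in the Appendix of \cite{Cus26}. Nothing in your argument touches those formulas. Your sentence ``this is precisely the entry recorded in the Appendix'' is the entire content of the lemma, and you assert it rather than check it.

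The weight semantics cannot replace that check, because they do not single out this matrix. Any conjugate of it, or its restriction to the core $\cC$, gives the same traces and the same recursion. Your fallback does not close the gap either. Checking $T=\{t\}$ and then declaring the general matrix a synchronized product of single registers assumes exactly the multi-monomial feature of the Appendix that must be verified, namely how (14) and (15) combine the $m$ blocks into a single sign.

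What is missing is the explicit dictionary that the paper's proof supplies:
\begin{itemize}
\item Split the integer index of a state into blocks of lengths $t_i-1$, one per monomial.
\item The powers of $2$ in the Appendix's set $X$ mark the leading digit of each block. A subset of $X$ therefore records which deleted leading digits equal $1$.
\item Equation (13) is the branch in which the digit appended to every register is $0$, and it always has coefficient $1$.
\item Equations (14) and (15) are the branch in which the appended digit is $1$. They assign $+1$ or $-1$ according as that subset has even or odd cardinality, which is exactly $(-1)^{\sum_t u^{(t)}_0}$.
\end{itemize}

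The orientation issue you flag is substantive, because a transpose cannot be absorbed into a simultaneous permutation, so it must be read off the definition. In these formulas (compare \eqref{E1}), the row index is the one constrained by the appended digit: the low digit of every block must take the same value. The column index ranges freely over the leading digits. So rows are new states and columns are old states, which is what \eqref{W-entry} requires.
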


\begin{proof}
We translate equations (13)--(15) in \cite[Appendix]{Cus26} into
binary-register notation.  The integer indexing a state is divided into
blocks of lengths $t_i-1$, one block for each monomial $(1,t_i)_n$.  The
powers of $2$ collected in the set denoted by $X$ in that Appendix mark the
leading digit of each block.  Thus a subset of $X$ specifies which leading
digits are deleted during a simultaneous shift of the registers.

Equation (13) is the branch in which the newly appended digit in every
register is $0$.  It shifts every block and always gives coefficient $1$.
Equations (14) and (15) are the branch in which the newly appended digit is
$1$.  The subset of $X$ records precisely those registers whose deleted
leading digit is $1$.  Equation (14) assigns coefficient $1$ when the
cardinality of that subset is even, while equation (15) assigns coefficient
$-1$ when it is odd.  Hence the sign is
\[
 (-1)^{\#\{t:u^{(t)}_0=1\}}
 =(-1)^{\sum_{t\in T}u^{(t)}_0}.
\]
This is exactly \eqref{W-entry}.  The possible difference between the
ordering of the binary blocks in the Appendix and the ordering in
\eqref{expanded-state-space} is accounted for by the simultaneous row
and column permutation.
\end{proof}

For $a=(a_0,\ldots,a_{M-1})\in\F_2^M$, define the synchronized expanded
state $\iota(a)\in\cS$ by
\begin{equation}\label{iota}
 \iota(a)^{(t)}
 =(a_{M-t+1},a_{M-t+2},\ldots,a_{M-1}),
 \qquad t\in T.
\end{equation}
Let
\begin{equation}\label{core}
 \cC=\{\iota(a):a\in\F_2^M\}.
\end{equation}
Thus every shorter register in a state in $\cC$ is the suffix of the
maximal register of the appropriate length.

\begin{lemma}[The recurrent core]\label{lem:core}
The set $\cC$ is exactly the recurrent part of the expanded 
transition graph.  Repeated deletion of zero rows and the matching columns
removes precisely the states outside $\cC$.
\end{lemma}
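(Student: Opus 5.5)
We work with the directed graph $G$ on $\cS$ that has an edge $u\to\Sigma_\varepsilon(u)$ for each $\varepsilon\in\F_2$; by Lemma~\ref{lem:appendix} these are exactly the nonzero entries of $W$, since the signs $\pm1$ never vanish. The recurrent part of $G$ is the set of states lying on some bi-infinite path, equivalently the set of states with arbitrarily long backward histories. Deleting zero rows together with the matching columns is the standard pruning procedure: a zero row belongs to a state with no outgoing edge (or no incoming edge, depending on the convention of $W$), and iterating removes exactly the states without arbitrarily long paths through them on that side. So it suffices to prove two things. First, every state of $\cC$ has both a predecessor and a successor inside $\cC$. Second, every state outside $\cC$ is eliminated after finitely many (at most $M$) deletion rounds.

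For the first point, I will check directly from \eqref{iota} and \eqref{expanded-shift} that
\[
\Sigma_\varepsilon(\iota(a))=\iota(\sigma_\varepsilon(a)).
\]
Each register of $\iota(a)$ is the suffix of $a$ of length $t-1$. Shifting it and appending $\varepsilon$ gives the suffix of length $t-1$ of $(a_1,\dots,a_{M-1},\varepsilon)$. Hence $\cC$ is forward-closed, and each state has two successors in $\cC$. For predecessors, $\iota(a)=\Sigma_{a_{M-1}}(\iota(a'))$ where $a'=(c,a_0,\dots,a_{M-2})$ for either choice of $c$. Thus the subgraph on $\cC$ is a copy of the de Bruijn graph with in- and out-degree $2$, so no state of $\cC$ is ever deleted. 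As a byproduct, $W$ restricted to $\cC$ matches $B$ of \eqref{B-definition} up to transposition. The sign $\sum_t u_0^{(t)}$ for $u=\iota(a)$ is $\sum_t a_{M-t+1}$, which is exactly the exponent in \eqref{B-definition}.

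For the second point, the key observation is that after $k$ steps every register's last $\min(k,t-1)$ digits are the last $k$ appended symbols, and these are common to all registers. So any state reached by a path of length $\ge M$ is synchronized, meaning it lies in $\cC$. Equivalently, a state $u\notin\cC$ has no path of length $M$ ending at it. If the deletion removes states with no incoming edge (zero column of the transition relation, i.e.\ zero row in the convention $W_{\text{target},\text{source}}$), then I will show by induction on $k$ that after $k$ rounds every surviving state has an in-path of length $k$. Consequently, after $M$ rounds only states of $\cC$ survive.

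The step I expect to be the main obstacle is bookkeeping the orientation. Since $W$ is indexed as $W_{\Sigma_\varepsilon(u),u}$, a zero row of $W$ corresponds to a state $v$ not of the form $\Sigma_\varepsilon(u)$ for any surviving $u$. The deletion is therefore by missing predecessors, as required. States outside $\cC$ always have successors, so they would never be pruned by outgoing degree. This is exactly why the row/column convention matters. I also need that when a row is deleted the matching column goes too, which removes that state's outgoing edges. With that removal the induction gives: $v$ survives round $k$ iff $v$ has a length-$k$ in-path. This establishes that the pruning removes precisely the states outside $\cC$.
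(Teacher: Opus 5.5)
Your proof is correct and follows the paper's route: you use the intertwining $\Sigma_\varepsilon(\iota(a))=\iota(\sigma_\varepsilon(a))$, the fact that all registers are synchronized after $M$ steps, and the orientation of $W$ (rows index new states), so a zero row is a state with no surviving predecessor. The differences are cosmetic. You certify the pruning by induction on rounds (a state survives $k$ rounds iff it has an in-path of length $k$), where the paper argues that the complement of $\cC$ is acyclic and receives no edges from $\cC$. You also read ``recurrent'' in the bi-infinite-path sense, which here coincides with lying on a directed cycle.
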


\begin{proof}
The set $\cC$ is invariant.  Indeed, if
\[
 a'=(a_1,\ldots,a_{M-1},\varepsilon),
\]
then \eqref{expanded-shift} and \eqref{iota} give
\begin{equation}\label{core-invariance}
 \Sigma_\varepsilon(\iota(a))=\iota(a').
\end{equation}

Starting from an arbitrary state of $\cS$, perform $M$ transitions and
append the digits
\[
 \varepsilon_0,\varepsilon_1,\ldots,\varepsilon_{M-1}.
\]
After these transitions, all digits originally present in every register
have been deleted.  The register associated with $(1,t)_n$, of length
$t-1$, contains
\[
 (\varepsilon_{M-t+1},\ldots,\varepsilon_{M-1}),
\]
so the resulting state is
$\iota(\varepsilon_0,\ldots,\varepsilon_{M-1})\in\cC$.  Hence every path
enters $\cC$ after at most $M$ steps, and no state outside $\cC$ can lie on
a directed cycle.

Conversely, each state $\iota(a)$ lies on a directed cycle: successively
append $a_0,a_1,\ldots,a_{M-1}$.  After $M$ steps every register has returned
to its initial synchronized value.

The subgraph outside $\cC$ is therefore finite and acyclic, and there is no
edge from $\cC$ to its complement.  A finite acyclic directed graph has a
vertex with no incoming edge.  Because rows index new states and columns
index old states in \eqref{W-entry}, such a vertex gives a zero row.
Deleting that row and the matching column and repeating removes the entire
acyclic part.  No state in $\cC$ is removed because it lies on a directed
cycle and always has an incoming edge from a surviving state in $\cC$.
\end{proof}

\begin{theorem}[Rules matrix factorization]\label{rulesfactor}
After the zero-row reduction, the resulting rules matrix has the form
\begin{equation}\label{rulesblock}
 \cR(f)=
 \begin{pmatrix}
  B^T&0\\
  c&2
 \end{pmatrix}
\end{equation}
for some row vector $c$.  Consequently
\begin{equation}\label{char-factor}
 \det(xI-\cR(f))=(x-2)\det(xI-B).
\end{equation}
In particular, the numbers $\eta_1,\ldots,\eta_{2^M}$ in
\eqref{eta-list} are precisely the eigenvalues of $B$, counted with
their algebraic multiplicities.
\end{theorem}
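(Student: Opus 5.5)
The plan is to obtain the top-left block by restricting $W$ from Lemma~\ref{lem:appendix} to the recurrent core $\cC$ of Lemma~\ref{lem:core}, and to read off the last row and column from the structure of the distinguished final row and column. The determinant factorization then follows from block triangularity.

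\textbf{Step 1: the core block.} By Lemma~\ref{lem:core}, repeated zero-row deletion leaves exactly the rows and columns indexed by $\cC$, together with the distinguished final row and column. The distinguished index is not removed, since it carries the entry $2$ on its diagonal. Index $\cC$ by $a\in\F_2^M$ through $\iota$. By \eqref{core-invariance} and \eqref{W-entry}, the entry of the surviving block in row $\iota(\sigma_\varepsilon(a))$ and column $\iota(a)$ is
\[
(-1)^{\varepsilon\sum_{t\in T}\iota(a)^{(t)}_0}=(-1)^{\varepsilon\sum_{t\in T}a_{M-t+1}},
\]
because \eqref{iota} gives $\iota(a)^{(t)}_0=a_{M-t+1}$. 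All other entries vanish. Comparing with \eqref{B-definition}, this entry equals $B_{a,\sigma_\varepsilon(a)}$ placed at position $(\sigma_\varepsilon(a),a)$, so the block is $B^T$. One must check that $\iota$ is injective, so that distinct $a$ give distinct rows. This holds because the register for $t=M+1$ has length $M$ and equals $a$ itself.

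\textbf{Step 2: the distinguished row and column.} I need to confirm from \cite[Section 2]{Cus26} that the distinguished column has zeros above its diagonal entry $2$. In other words, no shift-register state is fed by the distinguished state. This reflects the fact that the final coordinate tracks the total count $2^{n}$, which doubles and contributes only to itself. The final row then contributes some row vector $c$, whose precise value is irrelevant. Reordering rows and columns by simultaneous permutations does not affect the characteristic polynomial, so this gives \eqref{rulesblock}. I expect this step to be the main obstacle, since it depends on the exact layout of the expanded matrix in the reference rather than on anything proved here. If instead the nonzero entries of the distinguished line sit in its column, the same argument applies with the block lower triangular transposed, which gives the same determinant.

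\textbf{Step 3: the characteristic polynomial.} Since \eqref{rulesblock} is block lower triangular,
\[
\det(xI-\cR(f))=\det(xI-B^T)(x-2)=(x-2)\det(xI-B).
\]
Hence the roots other than the distinguished root $2$ are exactly the eigenvalues of $B$, with algebraic multiplicities. Lemma~\ref{lem:B-orthogonal} gives $|\eta_i|=\sqrt2\neq2$, so $2$ is not an eigenvalue of $B$. The distinguished root therefore occurs exactly once, and the list \eqref{eta-list} is well defined and coincides with the spectrum of $B$. Nonsingularity of $\cR(f)$ follows as well, since $\det\cR(f)=2\det B\neq0$.
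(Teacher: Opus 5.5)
Your proposal is correct and follows essentially the same route as the paper: restrict $W$ to the core $\cC$ and read off $B^T$ from \eqref{W-entry} and \eqref{core-invariance}. Then use that the final column is zero except for the entry $2$ (the paper simply asserts this, and it is recorded in the description of the expanded matrix in Section~\ref{compute}), and take the determinant of the block lower triangular form. Your extra checks are sound and make explicit what the paper leaves implicit: that $\iota$ is injective via the $t=M+1$ register, and that $2$ is not an eigenvalue of $B$ by Lemma~\ref{lem:B-orthogonal}, so that the list \eqref{eta-list} is well defined.
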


\begin{proof}
For a synchronized state $\iota(a)$, the leading digit of its register of
length $t-1$ is $a_{M-t+1}$.  Lemma~\ref{lem:appendix} and
\eqref{core-invariance} therefore show that the restriction of $W$ to
$\cC$ has entry
\[
 W_{\iota(\sigma_\varepsilon(a)),\iota(a)}
 =(-1)^{\varepsilon\sum_{t\in T}a_{M-t+1}}.
\]
This is $B^T$, rather than $B$, because $W$ uses rows for new states and
columns for old states, while \eqref{B-definition} uses rows for old
states and columns for new states.

The final column of the expanded rules matrix is zero except for its final
entry $2$.  The final row can contain additional entries, but these form a
row vector $c$ and do not disturb the block-triangular form.  Lemma
\ref{lem:core} now gives \eqref{rulesblock}, and taking determinants
gives \eqref{char-factor}.
\end{proof}

\section{The finite-type shift as an \texorpdfstring{$\F_2$}{F2} skew product}

We now connect the preceding matrix directly with the finite-type shift defined in \cite[p. 1099]{CC22}.

Recall that if $(X,T)$ is a dynamical system and $G$ is a fiber space, a
\emph{skew product} over $(X,T)$ is a dynamical system on $X\times G$ whose
transformation has the form
\[
 S(x,g)=(Tx,\Phi(x,g)),
\]
where the update $\Phi$ of the second coordinate is allowed to depend on the
base point $x$.  In the additive group-extension case used here this has the
form
\[
 S_\phi(x,g)=(Tx,g+\phi(x)),
\]
for a $G$-valued one-step cocycle $\phi$; see
\cite[Section~2.2, pp.~17-18]{EFHN15}.  In the present construction the
fiber group is $G=\F_2$.

Consider states
\begin{equation}\label{skew-states}
 (a,s)\in\F_2^M\times\F_2.
\end{equation}
For each $\varepsilon\in\F_2$, define the transition
\begin{equation}\label{skew-transition}
 (a,s)\longmapsto
 \left(
  \sigma_\varepsilon(a),
  s+\varepsilon\sum_{t\in T}a_{M-t+1}
 \right).
\end{equation}
Let $Y_T$ denote the resulting one-step shift of finite type.  For a finite
directed graph with vertex set $V$, its \emph{ordinary zero-one adjacency
matrix} is the matrix $A=(A_{u,v})_{u,v\in V}$ defined by
\[
 A_{u,v}=\begin{cases}
 1,&\text{if there is an allowed directed edge }u\longrightarrow v,\\
 0,&\text{otherwise}.
 \end{cases}
\]
Here $V=\F_2^M\times\F_2$, and the allowed edges are precisely the
transitions just defined; we denote this adjacency matrix by $A$.

\begin{lemma}[Periodic-point interpretation]\label{lem:periodic-points}
For every $n\geq1$, the number $N_n(Y_T)$ of points fixed by the $n$th
power of the shift is
\begin{equation}\label{N-weight}
 N_n(Y_T)=2^{n+1}-2\wt(f_n),
\end{equation}
where for small $n$ the right side uses the cyclic sum in
\eqref{explicit}, with repeated monomials canceled in $\F_2$.
Consequently $Y_T$ has the same periodic-point data, and hence the same
zeta function, as the shift $X_f$ constructed in \cite[p. 1099]{CC22}.
\end{lemma}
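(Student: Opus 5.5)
The plan is to count periodic points of $Y_T$ as closed walks of length $n$ in the directed graph on $V=\F_2^M\times\F_2$. In other words, I will compute $N_n(Y_T)=\tr(A^n)$ combinatorially. The count splits into a base part and a fiber part. First, closed walks of length $n$ in the base (unsigned de Bruijn) graph on $\F_2^M$ will be matched with periodic binary words of period $n$. Second, the fiber coordinate $s$ will return to its starting value exactly when the cocycle sum along the walk vanishes, and I will identify that sum with $f_n$.

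\textbf{Step 1 (base walks).} A closed walk of length $n$ in the graph of $\sigma_\varepsilon$ is determined by its starting state $a$ and the appended digits $\varepsilon_0,\ldots,\varepsilon_{n-1}$. Closing up forces the bi-infinite digit sequence to be $n$-periodic. Conversely, every $x\in\F_2^n$, extended periodically to $(x_j)_{j\in\mathbb Z}$, gives a closed walk:
\begin{itemize}
\item the $k$th state is the window $a^{(k)}=(x_{k},\ldots,x_{k+M-1})$;
\item the $k$th appended digit is $\varepsilon_k=x_{k+M}$.
\end{itemize}
This bijection holds for every $n\ge1$, including $n<M$, because windows of a periodic sequence are well defined. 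Hence there are exactly $2^n$ base closed walks, indexed by $x\in\F_2^n$.

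\textbf{Step 2 (fiber).} Along the walk attached to $x$, the total increment of $s$ is
\[
\sum_{k=0}^{n-1}\varepsilon_k\sum_{t\in T}a^{(k)}_{M-t+1}
=\sum_{k=0}^{n-1}\sum_{t\in T}x_{k+M}\,x_{k+M-t+1}.
\]
Reindexing with $r=k+M-t+1$, indices taken mod $n$, turns this into $\sum_{r=1}^n\sum_{t\in T}x_rx_{r+t-1}$. This is the cyclic sum \eqref{explicit}. For small $n$ the mod-$n$ reduction makes repeated monomials cancel in $\F_2$ automatically, which is exactly the stated convention.

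So the lift from $(a,s)$ closes up if and only if $f_n(x)=0$. In that case both choices of $s\in\F_2$ give closed walks; if $f_n(x)=1$, neither does. Therefore
\[
N_n(Y_T)=2\bigl(2^n-\wt(f_n)\bigr)=2^{n+1}-2\wt(f_n),
\]
which is \eqref{N-weight}.

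\textbf{Step 3 (comparison with $X_f$).} I would recall from \cite[p.~1099]{CC22} that the shift $X_f$ is built so that its periodic-point counts are $N_n(X_f)=2^{n+1}-2\wt(f_n)$ for all $n$. This should be its defining property, or the first lemma about it there, with the same small-$n$ convention. The zeta function is $\exp\bigl(\sum_n N_n z^n/n\bigr)$, so equal periodic-point data give equal zeta functions.

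I expect the main obstacle to be this last step. Matching the index conventions of \cite{CC22} precisely requires care: the direction of the shift, the placement of the newly appended digit relative to the window, and the treatment of $n<2M+1$. A secondary point is the reindexing in Step 2, which must use the correct offset so that it yields $x_rx_{r+t-1}$ rather than some shifted variant. This is harmless, however, since any cyclic shift of $r$ gives the same sum.
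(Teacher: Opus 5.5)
Your proposal is correct and follows essentially the same route as the paper. Closed base walks are indexed by cyclic words $x\in\F_2^n$ through their length-$M$ windows, and the fiber increment is reindexed (the paper uses $q=r+M-t+1$) into the cyclic sum $f_n(x)$, which gives $N_n(Y_T)=2(2^n-\wt(f_n))$. For the comparison with $X_f$, the paper also gives no further argument and simply cites the fixed-point count in \cite[Lemma~3.6]{CC22}, so the step you flag as the main obstacle is settled by that citation.
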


\begin{proof}
A cyclic binary word $x=(x_1,\ldots,x_n)$ determines a walk in the base
de Bruijn graph by
\[
 a^{(r)}=(x_r,x_{r+1},\ldots,x_{r+M-1}),
 \qquad 1\leq r\leq n,
\]
where indices are reduced modulo $n$.  At the $r$th step, the newly appended
digit is $x_{r+M}$, so the increment in the second coordinate in
\eqref{skew-transition} is
\[
 x_{r+M}\sum_{t\in T}x_{r+M-t+1}.
\]
The second coordinate closes after $n$ steps exactly when (recall \eqref{explicit})
\begin{align*}
 0
 &=\sum_{r=1}^{n}\sum_{t\in T}x_{r+M}x_{r+M-t+1}\\
 &=\sum_{q=1}^{n}\sum_{t\in T}x_qx_{q+t-1}
 =f_n(x)
 \qquad\text{in }\F_2.
\end{align*}
For the second equality, for each fixed $t$ we make the cyclic change of
variable $q=r+M-t+1$.

If $f_n(x)=0$, both initial choices $s=0,1$ give closed lifted walks.  If
$f_n(x)=1$, neither choice closes.  Hence
\[
 N_n(Y_T)=2\#\{x\in\F_2^n:f_n(x)=0\}
 =2(2^n-\wt(f_n)),
\]
which is \eqref{N-weight}.  This is the fixed point count in
\cite[Lemma~3.6]{CC22}.
\end{proof}

\section{Fourier block decomposition and the zeta determinant}

Let $U$ be the unsigned binary de Bruijn matrix defined on the state space
$\F_2^M$ by
\begin{equation}\label{U-definition}
 U_{a,b}=
 \begin{cases}
 1,&b=\sigma_\varepsilon(a)	\text{ for some }\varepsilon\in\F_2,\\
 0,&\text{otherwise}.
 \end{cases}
\end{equation}

\begin{theorem}[Fourier block decomposition]\label{Fourier}
After Fourier transformation in the second coordinate
$s\in\F_2$, the adjacency matrix $A$ of $Y_T$ is similar over
$\mathbb R$ to the block diagonal matrix
\begin{equation}\label{A-decomposition}
 A\sim U\oplus B
 =\begin{pmatrix}
   U&0\\
   0&B
  \end{pmatrix}.
\end{equation}
Here $\oplus$ denotes the direct sum of matrices, and $\sim$ denotes
similarity.  Consequently
\begin{equation}\label{A-determinant}
 \det(I-zA)=(1-2z)\det(I-zB).
\end{equation}
\end{theorem}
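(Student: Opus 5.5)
I will write the adjacency matrix $A$ of $Y_T$ in block form relative to the fiber coordinate $s\in\F_2$, then diagonalize the fiber part with the $2\times2$ Hadamard (Fourier) matrix.

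For $a,b\in\F_2^M$ and $\varepsilon$ with $b=\sigma_\varepsilon(a)$, put
\[
 \phi(a,b)=\varepsilon\sum_{t\in T}a_{M-t+1}\in\F_2 .
\]
Note that $\varepsilon$ is determined by $b$ as its last digit, so $\phi$ is well defined on edges. By \eqref{skew-transition}, $A_{(a,s),(b,s')}=1$ exactly when $b=\sigma_\varepsilon(a)$ for some $\varepsilon$ and $s'=s+\phi(a,b)$. Thus $A$ is a $2^M\times 2^M$ array of $2\times2$ blocks. The block in position $(a,b)$ is the zero matrix if $b$ is not a successor of $a$. Otherwise it is $P^{\phi(a,b)}$, where $P=\begin{pmatrix}0&1\\1&0\end{pmatrix}$; it is $I_2$ when $\phi=0$ and $P$ when $\phi=1$. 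In Kronecker notation,
\[
 A=\sum_{(a,b)} E_{a,b}\otimes P^{\phi(a,b)} .
\]

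Next let $H=\begin{pmatrix}1&1\\1&-1\end{pmatrix}$, so $H^{-1}=\tfrac12H$ and $H^{-1}PH=\operatorname{diag}(1,-1)$. This is exactly the character decomposition of $\F_2$, the Fourier transform in $s$. Conjugating by $I_{2^M}\otimes H$ replaces each block $P^{\phi}$ by $\operatorname{diag}(1,(-1)^{\phi})$. Hence
\[
 (I\otimes H)^{-1}A(I\otimes H)=\sum_{(a,b)}E_{a,b}\otimes\operatorname{diag}\bigl(1,(-1)^{\phi(a,b)}\bigr).
\]
A permutation (commutation) matrix that regroups coordinates by the character index then carries this to $U'\oplus B'$. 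Here $U'_{a,b}=1$ on every edge, so $U'=U$ by \eqref{U-definition}. Also $B'_{a,b}=(-1)^{\phi(a,b)}$ on edges and $0$ elsewhere, which is precisely \eqref{B-definition}, so $B'=B$. All matrices involved are real, so the similarity is over $\mathbb R$, which gives \eqref{A-decomposition}.

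For \eqref{A-determinant}, similarity gives $\det(I-zA)=\det(I-zU)\det(I-zB)$, so it remains to show $\det(I-zU)=1-2z$. I would do this by showing that $U$ has characteristic polynomial $x^{2^M-1}(x-2)$, i.e.\ $\operatorname{tr}U^n=2^n$ for all $n\ge1$. This holds because closed walks of length $n$ in the de Bruijn graph correspond bijectively to cyclic binary words of length $n$, by the same walk construction used in the proof of Lemma~\ref{lem:periodic-points}. Alternatively, $U^M=J$ (the all-ones matrix), since after $M$ steps every word is reached uniquely. Then $U$ is nilpotent-plus-rank-one in spectrum, with $U^M$ having eigenvalues $2^M$ and $0$ and the single eigenvalue $2$ coming from $U\mathbf 1=2\mathbf 1$. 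Either route gives $\det(I-zU)=\exp\bigl(-\sum_n 2^n z^n/n\bigr)=1-2z$.

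The only step needing care is the bookkeeping in the Fourier step: checking that the fiber increment depends only on the edge $(a,b)$, and that the commutation permutation yields exactly $U$ and $B$ with rows as old states and columns as new states, matching \eqref{B-definition}. The computation for $U$ is routine.
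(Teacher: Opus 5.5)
Your proposal is correct and follows essentially the same route as the paper. Conjugating by $I\otimes H$ and then regrouping coordinates by character index is the paper's change of basis to the Fourier vectors $f_{a,r}$, up to the factor $1/\sqrt2$; your Kronecker-product bookkeeping replaces its entrywise computation, and your closed-walk count $\tr(U^n)=2^n$ is the paper's argument for $\det(I-zU)=1-2z$ (your alternative via $U^M=J$ is a valid extra).
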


\begin{proof}
Put
\[
 \phi(a,\varepsilon)
 =\varepsilon\sum_{t\in T}a_{M-t+1}\in\F_2.
\]
The two characters of the additive group $\F_2$ are
\[
 \chi_0(s)=1,
 \qquad
 \chi_1(s)=(-1)^s.
\]
For $a\in\F_2^M$ and $r\in\F_2$, define the normalized Fourier basis
vector
\[
 f_{a,r}
 =\frac1{\sqrt2}\sum_{s\in\F_2}(-1)^{rs}e_{(a,s)}.
\]
Thus
\[
 f_{a,0}=\frac1{\sqrt2}\bigl(e_{(a,0)}+e_{(a,1)}\bigr),
 \qquad
 f_{a,1}=\frac1{\sqrt2}\bigl(e_{(a,0)}-e_{(a,1)}\bigr).
\]
Let $P$ be the orthogonal change-of-basis matrix whose columns are these
vectors, ordered first by all pairs $(a,0)$ and then by all pairs $(a,1)$.
We now compute every entry of $P^{-1}AP=P^TAP$.

For $a,b\in\F_2^M$ and $r,q\in\F_2$, the definition of the adjacency
matrix gives
\begin{align*}
 (P^{-1}AP)_{(a,r),(b,q)}
 &=\frac12\sum_{s,t\in\F_2}
   (-1)^{rs+qt}A_{(a,s),(b,t)}.
\end{align*}
If $b\ne\sigma_\varepsilon(a)$ for both choices of $\varepsilon$, this
entry is zero.  Otherwise there is a unique $\varepsilon\in\F_2$ such
that $b=\sigma_\varepsilon(a)$, and the transition
\eqref{skew-transition} requires $t=s+\phi(a,\varepsilon)$.  Hence
\begin{align*}
 (P^{-1}AP)_{(a,r),(b,q)}
 &=\frac12\sum_{s\in\F_2}
   (-1)^{rs+q(s+\phi(a,\varepsilon))}\\
 &=(-1)^{q\phi(a,\varepsilon)}
   \frac12\sum_{s\in\F_2}(-1)^{(r+q)s}.
\end{align*}
The last sum is $2$ when $r=q$ and $0$ when $r\ne q$.  Therefore the
off-diagonal Fourier blocks vanish.  When $r=q=0$, the entry is $1$
precisely when $b=\sigma_\varepsilon(a)$, so the trivial-character block
is exactly $U$ by \eqref{U-definition}.  When $r=q=1$, the entry is
\[
 (-1)^{\phi(a,\varepsilon)}
 =(-1)^{\varepsilon\sum_{t\in T}a_{M-t+1}},
\]
so the nontrivial-character block is exactly $B$ by
\eqref{B-definition}.  Thus
\[
 P^{-1}AP=
 \begin{pmatrix}
  U&0\\
  0&B
 \end{pmatrix}
 =U\oplus B,
\]
which proves \eqref{A-decomposition}.  Notice that, with the explicit
row-source conventions in \eqref{B-definition}, \eqref{U-definition},
and the definition of $A$, no transpose is needed.  If one instead uses
the opposite adjacency convention, all three matrices are transposed;
this does not affect their characteristic polynomials or the determinants
below.

There are exactly $2^n$ closed walks of length $n$ in the unsigned binary
de Bruijn graph, one for each cyclic binary word of length $n$.  Thus
\[
 \tr(U^n)=2^n\qquad(n\geq1).
\]
Using the formal identity
\begin{equation}\label{trace-log}
 -\log\det(I-zC)=\sum_{n\geq1}\tr(C^n)\frac{z^n}{n}
\end{equation}
for a square matrix $C$, we obtain
\[
 -\log\det(I-zU)
 =\sum_{n\geq1}2^n\frac{z^n}{n}
 =-\log(1-2z).
\]
Both determinants have constant term $1$, so
$\det(I-zU)=1-2z$.  Taking determinants in
\eqref{A-decomposition} now proves \eqref{A-determinant}.
\end{proof}

Recall that the Artin--Mazur zeta function of a finite-type shift $Y$ is (see \cite[Definition 2.4]{CC22})
\begin{equation}\label{zeta-definition} 
 \zeta_Y(z)
 =\exp\left(\sum_{n\geq1}N_n(Y)\frac{z^n}{n}\right).
\end{equation}
For a one-step shift with adjacency matrix $A$, the standard determinant
formula is (see \cite[Theorem 2.5]{CC22})
\begin{equation}\label{zeta-adjacency}
 \zeta_Y(z)=\frac1{\det(I-zA)}.
\end{equation}

We shall call $\det(I-z\cR(f))$ the \emph{rules determinant}. The next theorem says this determinant shows up
in the zeta function.

\begin{theorem}[Zeta denominator equals the rules determinant]
\label{thm:zeta-rules}
For the quadratic RS family $f$,
\begin{equation}\label{zeta-rules}
 \boxed{
 \zeta_{X_f}(z)
 =\zeta_{Y_T}(z)
 =\frac1{(1-2z)\det(I-zB)}
 =\frac1{\det(I-z\cR(f))}.}
\end{equation}
\end{theorem}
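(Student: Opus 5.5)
The identity \eqref{zeta-rules} is a chain of three equalities, and each follows from a result already established, so I would prove them from left to right.

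\emph{First equality.} I would apply Lemma~\ref{lem:periodic-points}. It shows that $N_n(Y_T)=N_n(X_f)=2^{n+1}-2\wt(f_n)$ for every $n\geq1$. Since the zeta function \eqref{zeta-definition} depends only on the sequence $(N_n)_{n\geq1}$, this gives $\zeta_{X_f}=\zeta_{Y_T}$ as formal power series. The small values of $n$ are already covered by the convention in that lemma.

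\emph{Second equality.} The shift $Y_T$ is a one-step shift of finite type whose ordinary zero-one adjacency matrix is $A$. Each pair $\bigl((a,s),\varepsilon\bigr)$ yields exactly one edge, and the targets $\sigma_\varepsilon(a)$ for $\varepsilon=0,1$ are distinct. Hence there are no multiple edges, and $N_n(Y_T)=\tr(A^n)$. Then \eqref{zeta-adjacency} gives $\zeta_{Y_T}(z)=1/\det(I-zA)$, and Theorem~\ref{Fourier}, through \eqref{A-determinant}, rewrites this as $1/\bigl((1-2z)\det(I-zB)\bigr)$.

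\emph{Third equality.} I would apply Theorem~\ref{rulesfactor}. The block-triangular form \eqref{rulesblock} gives
\[
\det(I-z\cR(f))=(1-2z)\det(I-zB^T)=(1-2z)\det(I-zB),
\]
because $\det(I-zB^T)=\det\bigl((I-zB)^T\bigr)$. Equivalently, one can substitute $x=1/z$ into \eqref{char-factor} and multiply by $z^{2^M+1}$ to pass to reversed polynomials.

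\emph{Main obstacle.} The only delicate point is the first equality. One must check that the shift $X_f$ of \cite{CC22} has exactly the periodic-point counts of Lemma~\ref{lem:periodic-points} for \emph{all} $n\geq1$, including $n<2M+1$, where the cyclic sum has coincident monomials. Lemma~\ref{lem:periodic-points} asserts this by matching \cite[Lemma~3.6]{CC22}, so I would cite it directly. The remaining steps are formal manipulations of determinants.
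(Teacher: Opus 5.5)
Your proposal is correct and follows the paper's proof essentially step for step: Lemma~\ref{lem:periodic-points} gives the first equality, \eqref{zeta-adjacency} with \eqref{A-determinant} gives the second, and Theorem~\ref{rulesfactor} gives the third. Your extra checks only make explicit details the paper leaves implicit, namely that there are no multiple edges (so $N_n(Y_T)=\tr(A^n)$) and that $\det(I-zB^T)=\det(I-zB)$.
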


\begin{proof}
The equality $\zeta_{X_f}=\zeta_{Y_T}$ follows from
Lemma~\ref{lem:periodic-points}.  Equations
\eqref{zeta-adjacency} and \eqref{A-determinant} give the middle
expression in \eqref{zeta-rules}.  Finally, Theorem
\ref{rulesfactor}, with $x$ replaced by $1/z$ and the appropriate
power of $z$ cleared, gives
\[
 \det(I-z\cR(f))=(1-2z)\det(I-zB).
\]
This proves the final equality.
\end{proof}

Theorem~\ref{thm:zeta-rules} is the crucial fact.  The general
symbolic-dynamics zeta function theorem (see \cite[Theorem 2.5]{CC22}) produces algebraic integers as characteristic
values of a finite-type shift.  Equation \eqref{zeta-rules} shows that,
in the quadratic case, these characteristic values are exactly the roots of
the characteristic polynomial of the rules matrix, with their
algebraic multiplicities.  Thus no assumption that the characteristic polynomial has distinct roots
(as was made in \cite[Section 4]{Cus26}, where the ECC was proved under that assumption) is
needed.

\section{The trace identity and easy coefficients}

\begin{lemma}[Signed trace identity]\label{lem:trace-identity}
For every $n\geq1$,
\begin{equation}\label{trace-identity}
 \tr(B^n)
 =\sum_{x\in\F_2^n}(-1)^{f_n(x)}.
\end{equation}
In particular, for $n\geq2M+1$,
\begin{equation}\label{weight-trace}
 \wt(f_n)=2^{n-1}-\frac12\tr(B^n).
\end{equation}
\end{lemma}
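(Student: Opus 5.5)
The plan is to compute $\tr(B^n)$ directly as a signed count of closed walks in the base de Bruijn graph. By \eqref{B-definition},
\[
 \tr(B^n)=\sum_{a^{(1)},\ldots,a^{(n)}}\prod_{r=1}^{n}B_{a^{(r)},a^{(r+1)}},
 \qquad a^{(n+1)}=a^{(1)},
\]
and only closed walks in the unsigned de Bruijn graph contribute. As in the proof of Lemma~\ref{lem:periodic-points}, closed walks of length $n$ correspond bijectively to cyclic binary words $x=(x_1,\ldots,x_n)$, with indices mod $n$, via $a^{(r)}=(x_r,\ldots,x_{r+M-1})$. The $r$th step appends $\varepsilon=x_{r+M}$.

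This bijection needs some care when $n<M$, where the words $a^{(r)}$ wrap around. I would justify it as follows. A closed walk of length $n$ is determined by its sequence of appended digits $\varepsilon_1,\ldots,\varepsilon_n$ together with its start. Closure forces the state to be the last $M$ digits of the periodic extension of $\varepsilon$, so walks correspond to $\F_2^n$ for every $n\geq1$. This is the same count already used to get $\tr(U^n)=2^n$ in Theorem~\ref{Fourier}.

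Next I evaluate the product of signs along the walk. The $r$th step contributes $(-1)^{x_{r+M}\sum_{t\in T}x_{r+M-t+1}}$, so the product is $(-1)$ raised to
\[
 \sum_{r=1}^n\sum_{t\in T}x_{r+M}x_{r+M-t+1}.
\]
By the cyclic reindexing $q=r+M-t+1$ carried out in the proof of Lemma~\ref{lem:periodic-points}, this sum equals $f_n(x)$ in $\F_2$, interpreted as the cyclic sum \eqref{explicit} with repeated monomials cancelled. That proves \eqref{trace-identity}.

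An alternative route is to take traces of the block decomposition \eqref{A-decomposition}. This gives $N_n(Y_T)=\tr(A^n)=\tr(U^n)+\tr(B^n)=2^n+\tr(B^n)$. Combining with \eqref{N-weight} yields $\tr(B^n)=2^n-2\wt(f_n)$, which is the same identity because $\sum_x(-1)^{f_n(x)}=2^n-2\wt(f_n)$. I would mention this as a check. For \eqref{weight-trace}, take $n\geq 2M+1$, where $f_n$ is the genuine function and $\wt(f_n)$ is the true weight. Then $\sum_x(-1)^{f_n(x)}=2^n-2\wt(f_n)$, and solving gives $\wt(f_n)=2^{n-1}-\tfrac12\tr(B^n)$.

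The main obstacle is the small-$n$ regime. There one must check that the walk/word bijection and the cyclic reindexing remain valid when $n\le M$, where indices mod $n$ repeat and monomials may coincide or cancel. The fixed convention in Lemma~\ref{lem:periodic-points}, that the right side uses the cyclic sum with cancellations in $\F_2$, resolves this: the reindexing $q=r+M-t+1$ is a bijection of $\mathbb Z/n$ for each fixed $t$ regardless of $n$. So the computation goes through verbatim as an identity in $\F_2$, and only the interpretation as a weight needs $n\geq 2M+1$.
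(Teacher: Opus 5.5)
Your proposal is correct and follows the paper's proof essentially verbatim. You expand $\tr(B^n)$ as a signed sum over closed de Bruijn walks, identify these walks with cyclic words $x\in\F_2^n$, reindex with $q=r+M-t+1$ to get the sign $(-1)^{f_n(x)}$, and finish with $\sum_x(-1)^{g(x)}=2^n-2\wt(g)$. Your explicit justification of the walk/word bijection when $n<M$ and your cross-check via $\tr(A^n)=\tr(U^n)+\tr(B^n)$ are both valid additions that the paper leaves implicit or omits.
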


\begin{proof}
Expanding the trace as a product of matrix entries gives a sum over the
closed walks of length $n$ in the binary de Bruijn graph.  Such closed walks
are in bijection with cyclic words $x=(x_1,\ldots,x_n)\in\F_2^n$, since
the state at time $r$ must be $(x_r,\ldots,x_{r+M-1})$.  By
\eqref{B-definition}, the sign accumulated around the closed walk
belonging to $x$ is
\begin{align*}
 \prod_{r=1}^{n}
 (-1)^{x_{r+M}\sum_{t\in T}x_{r+M-t+1}}
 &=(-1)^{\sum_{r=1}^{n}\sum_{t\in T}
                x_{r+M}x_{r+M-t+1}}\\
 &=(-1)^{\sum_{q=1}^{n}\sum_{t\in T}x_qx_{q+t-1}}
 =(-1)^{f_n(x)}.
\end{align*}
This proves \eqref{trace-identity}.  For every Boolean function $g$ in
$n$ variables,
\[
 \sum_{x\in\F_2^n}(-1)^{g(x)}
 =\#\{x:g(x)=0\}-\#\{x:g(x)=1\}
 =2^n-2\wt(g).
\]
Applying this with $g=f_n$ proves \eqref{weight-trace}.
\end{proof}

Let $\eta_1,\ldots,\eta_{2^M}$ be the eigenvalues of $B$, counted with
algebraic multiplicity.  For every square complex matrix, diagonalizable or
not,
\begin{equation}\label{trace-eigenvalues}
 \tr(B^n)=\eta_1^n+\cdots+\eta_{2^M}^n.
\end{equation}
For example, this follows immediately by putting $B$ into upper-triangular
Schur form.  Combining \eqref{weight-trace} and
\eqref{trace-eigenvalues} gives
\begin{equation}\label{ECC-large-n}
 \wt(f_n)=2^{n-1}-\frac12
 \left(\eta_1^n+\cdots+\eta_{2^M}^n\right),
 \qquad n\geq2M+1.
\end{equation}
By Theorem~\ref{rulesfactor}, these $\eta_j$ are the
nontrivial characteristic roots of the rules matrix, with the
multiplicities required in Conjecture~1.

\section{Unique backward extension}

It remains to verify that \eqref{ECC-large-n} gives the sequence obtained
by extending the weight recursion backwards.

Lemma~\ref{lem:B-orthogonal} shows that $B$ is nonsingular and that all its
eigenvalues have absolute value $\sqrt2$.  Hence $2$ is not an eigenvalue of
$B$.  The rules matrix in \eqref{rulesblock} is therefore nonsingular.
Moreover, it is similar to the block diagonal matrix
\begin{equation}\label{block-diagonal}
 \begin{pmatrix}B^T&0\\0&2\end{pmatrix}.
\end{equation}
Indeed, since $B^T-2I$ is invertible, a lower triangular shear eliminates
the row $c$.  This also shows directly that $\cR(f)$ is diagonalizable,
although only nonsingularity is needed below.

Define for every $n\geq1$
\begin{equation}\label{z-sequence}
 z_n=2^{n-1}-\frac12
 \left(\eta_1^n+\cdots+\eta_{2^M}^n\right).
\end{equation}
Because $z_n$ is a linear combination of powers of the roots of the rules
matrix, it satisfies the recurrence determined by the characteristic polynomial
of $\det(I-z\cR(f))$.  Equivalently, this follows from Cayley--Hamilton after
taking traces of powers of $B$.

By \eqref{ECC-large-n},
\begin{equation}\label{tail-agreement}
 z_n=\wt(f_n)\qquad(n\geq2M+1).
\end{equation}
The constant coefficient of the recurrence polynomial is nonzero because
$\cR(f)$ is nonsingular.  Therefore the recurrence can be solved uniquely
backwards: the earliest term in any recurrence relation is determined by
the later terms.  There is consequently only one recurrence sequence that
agrees with the actual weights for all sufficiently large $n$.  By
\eqref{tail-agreement}, that sequence is $z_n$.  Formula
\eqref{z-sequence} is exactly \eqref{ECC}, completing the proof of
Theorem~\ref{thm:main}.

\section{Computing the rules matrix} \label{compute}
We begin with a set 
\begin{equation} \label{Qm}
 Q_m = \{(1,t_1), (1,t_2), \ldots, (1,t_m)\}
 \end{equation}
of quadratic RS functions and we shall describe the program for computing the rules matrix for the sum of those $m$ functions. 
It is convenient to refer to the final rules matrix with the last row and last column omitted as the \emph{Rules matrix}. The next theorem (it is \cite[Theorem 2]{Cus26}]) gives some important properties of the Rules matrix. Note that the Rules matrix has an especially simple form if 
\begin{equation} \label{ti}
t_1>t_2> \ldots >t_m
\end{equation}
is true.
\begin{theorem} \label{thm2}
Every row and every column in the Rules matrix for the sum of the functions in \eqref{Qm} has exactly two nonzero entries, 
which are either $1, 1$ or $1, -1$.  There is an equal number of rows with two $1$'s and with a $-1.$  If a row with $1,1$ is given, there is another row  with $1,-1$ or $-1, 1$ in the same columns in which the two $1$’s appear. Also if
a row with entries $1,-1$ is given, there is another row with $1,1$ in the same columns 
in which the $1$ and $-1$ appear.  We call the two rows with nonzero entries in the
same columns \emph{paired~rows}. If the functions in $Q_m$ satisfy \eqref{ti}, then any two paired rows
in the Rules matrix are consecutive, with the first nonzero element in row $2i-1, ~1 \leq i \leq 2^{k-1},$
where $k = \text{max} ~(t_i-1),$ occurring in position $i.$  Every row has one of its two nonzero entries in its first half and the other in its second half, with the entries in each half the same distance away from the first element in the respective halves.
\end{theorem}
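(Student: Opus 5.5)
The plan is to read every assertion of Theorem~\ref{thm2} off Theorem~\ref{rulesfactor}, which identifies the Rules matrix (the rules matrix with its last row and column omitted) with $B^T$ up to a simultaneous permutation of rows and columns. Such a permutation moves nonzero entries around but keeps each row's and each column's multiset of entries. So the support and sign statements can be checked on $B$ and $B^T$ directly. The positional statements, namely consecutiveness and the halves, depend on the ordering, and for those I will fix the natural binary ordering of states that the Appendix of \cite{Cus26} uses under \eqref{ti}.

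\emph{Support and signs.} By \eqref{B-definition}, row $a$ of $B$ has exactly two nonzero entries, at $\sigma_0(a)$ and $\sigma_1(a)$. The entry at $\sigma_0(a)$ is always $1$, and the entry at $\sigma_1(a)$ is $(-1)^{\sum_t a_{M-t+1}}$. So each row of $B$ is $1,1$ or $1,-1$.

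Dually, $b=(b_0,\dots,b_{M-1})$ equals $\sigma_\varepsilon(a)$ exactly when $a=(a_0,b_0,\dots,b_{M-2})$ and $\varepsilon=b_{M-1}$. Hence column $b$ of $B$ also has exactly two nonzero entries, one for each choice of $a_0$. Since the rows and columns of $B^T$ are the columns and rows of $B$, every row and every column of the Rules matrix has two entries from $\{\pm1\}$.

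For the pairing I will work with rows of $B^T$, i.e.\ columns of $B$. The two nonzero entries in column $b$ come from rows $a=(0,b_0,\dots,b_{M-2})$ and $a'=(1,b_0,\dots,b_{M-2})$. If $b_{M-1}=0$, both entries are $1$. If $b_{M-1}=1$, the term $t=M+1$ in the exponent contributes $a_0$, so the two entries differ by the sign $(-1)^{a_0}$, giving a $1$ and a $-1$. The column $b^*$ obtained by flipping $b_{M-1}$ has the same two supporting rows $a,a'$ and the opposite type. This gives the pairing of a $1,1$ row with a $1,-1$ row on the same pair of columns. It also gives the equal count, $2^{M-1}$ of each type. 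This is the same computation as in the proof of Lemma~\ref{lem:B-orthogonal}.

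\emph{Positional statements under \eqref{ti}.} Index states by integers via the binary expansion, with $b_{M-1}$ as the least significant digit, so that paired rows $b,b^*$ become consecutive indices $2i-2$ and $2i-1$. The supporting columns $a,a'$ differ only in $a_0$, the most significant digit. Hence one lies in the first half of the index range and the other in the second half, and both sit at the same offset $(b_0,\dots,b_{M-2})$, i.e.\ $i-1$, from the start of their half. This gives the first nonzero entry of row $2i-1$ in position $i$ (1-based), and the ``same distance'' statement.

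\emph{Main obstacle.} The substance is all in $B$; the delicate step is confirming that the fixed permutation in Lemma~\ref{lem:appendix}, combined with the zero-row reduction of Lemma~\ref{lem:core}, produces exactly this binary ordering of $\cC$ when \eqref{ti} holds. I would check this by tracking the maximal register (of length $M$, for $t_1$, which comes first under \eqref{ti}). The blocks in the Appendix's integer indexing then put that register's digits in the most significant positions. On $\cC$ the shorter registers are suffixes of it by \eqref{iota}, so the surviving states, listed in increasing index order, are ordered by the binary value of $a$. If that ordering differed, I would prove only the permutation-invariant claims outright and state the positional ones relative to this explicit ordering.
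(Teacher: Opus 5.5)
Your proposal is correct, but it takes a different route from the paper, which gives no argument for Theorem~\ref{thm2}: it simply imports the result as \cite[Theorem 2]{Cus26}, where it is obtained from the program's defining formulas \eqref{E1}--\eqref{E3} and the zero-row deletion.

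You instead derive it as a corollary of Theorem~\ref{rulesfactor}. Once the Rules matrix is identified with $B^T$, everything is read off $B$:
\begin{itemize}
\item paired rows are the two columns $b,b^*$ of $B$ that differ only in the appended digit $b_{M-1}$;
\item their common support consists of the two choices of the deleted digit $a_0$;
\item the type ($1,1$ versus opposite signs) is decided by $b_{M-1}$, with the opposite signs forced by the $t=M+1$ term exactly as in Lemma~\ref{lem:B-orthogonal};
\item the equal counts $2^{M-1}$ then follow.
\end{itemize}

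This buys a conceptual explanation: the pairing is the combinatorial content of $BB^T=B^TB=2I$. It also shows that the support, sign and pairing assertions hold for any listing of the $t_i$, with \eqref{ti} needed only for the positional ones. The cost is dependence on Lemmas~\ref{lem:appendix} and~\ref{lem:core} and on the actual index ordering.

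Your sketch of the ordering check is right, but you should state the intra-block orientation explicitly, not only the block placement. Under \eqref{ti} the $t_1$-register occupies bits $y_{m-1},\ldots,y_m-1$. Each block's deleted leading digit sits at its top bit $2^{y_k-1}\in X$. The appended digit sits at its bottom bit, namely the bits $g_i(y_0),\ldots,g_i(y_{m-1})$ tested in \eqref{E1}--\eqref{E3}. The lower blocks of $\iota(a)$ are determined by $a$, and zero-row deletion preserves relative order. Hence the surviving indices are ordered by the binary value of $a$, with $a_0$ most significant and $a_{M-1}$ least significant.

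This orientation is what carries the ``consecutive'' and ``halves'' claims: with the opposite orientation, paired rows would sit $2^{M-1}$ apart rather than consecutively. The $Q_1=\{(1,4)\}$ example in Section~\ref{compute} confirms the orientation.
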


If \eqref{ti} holds, Theorem \ref{thm2} implies that the Rules matrix always has two diagonal stripes made up of paired nonzero entries,
one stripe in each of the left and right halves of the matrix. Looking at the following example will be helpful in
understanding the content of Theorem \ref{thm2}. Note that in the example all paired rows are consecutive, as guaranteed by Theorem \ref{thm2}.

Let $Q_1=\{(1,4)\}.$  Then the rules matrix (size $9$) is
\begin{equation*}
   \left(
   \begin{matrix}
    1& 0  & 0  & 0  &  1 & 0  & 0  & 0  & 0 \\
    1& 0  & 0  & 0  & -1 & 0  & 0  & 0  & 0 \\
    0& 1  & 0  & 0  &  0 & 1  & 0  & 0  & 0 \\
    0& 1  & 0  & 0  &  0 &-1  & 0  & 0  & 0 \\
    0& 0  & 1  & 0  &  0 & 0  & 1  & 0  & 0 \\
    0& 0  & 1  & 0  &  0 & 0  &-1  & 0  & 0 \\
    0& 0  & 0  & 1  &  0 & 0  & 0  & 1  & 0 \\
    0& 0  & 0  & 1  &  0 & 0  & 0  &-1  & 0 \\
    0& 0  & 0  & 0  &  1 & 1  & 1  & 1  & 2  
 \end{matrix}
 \right)
\end{equation*}
The minimal polynomial for this matrix is $$x^7-2x^6-8x+16 = (x-2)(x^2-2)(x^4+2x^2+4)$$
and the characteristic polynomial is $$x^9-2x^8-2x^7+4x^6-8x^3+16x^2+16x-32 = (x-2)(x^2-2)^2(x^4+2x^2+4).$$

To obtain the rules matrix, we first compute the expanded rules matrix. Define a sequence $y_k,~0 \leq k \leq m$ by 
$$y_0=0, y_k = \sum_{i=1}^k (t_{m-i+1}-1).$$
Define
$$X= \{2^{y_k-1}:~1 \leq k \leq m\}=\{x_i:~1 \leq i \leq m\}.$$
Define a function $F: \{subsets ~of~ X\} \rightarrow \mathbb{R},$ which maps a subset $u$ of $X$  to the sum of its elements, by
$$F(u) = \sum_{ x_i \in u}.$$
Given a positive integer $b= \sum_{i=0}^B~ g_b(i) 2^i$ in base $2,$ define its $k$-th bit, using $k=0, 1, ..., B,$ counting from the right, by
$$g_b(k) = \left[ {\frac{b}{2^k}} \right] - 2 \left[{\frac{b}{2^{k+1}}}\right].$$
Define 
$$S=\{F(u):~u \in P(X)\} = F[P(X)],$$
where $P(X)$ is the power set of all subsets of $X.$

Let the entries in the expanded matrix of size $2^{y_m}+1$ be denoted by $e_{i,j}$ where
$0 \leq i,~j \leq 2^{y_m}.$ Then the matrix entries are defined as follows for $i,j <2^{y_m}:$
\begin{equation} \label{E1}
e_{i,j} = 1 ~ \text{if}~ g_i(y_0)= \ldots =g_i(y_{m-1})=0~\text{and}~j=\left[ {\frac{i}{2}} \right]+s~
\text{for~some}~s \in S
\end{equation}

\begin{equation} \label{E2}
e_{i,j} = 1 ~ \text{if~all}~ g_i(y_k)=1~\text{and}~j=\left[ {\frac{i}{2}} \right]-f(x)~
\text{for~some}~f(x) \in S,~|x|~\text{even} 
\end{equation}

\begin{equation} \label{E3}
e_{i,j} =-1~\text{if~all}~ g_i(y_k)=1~\text{and}~j=\left[ {\frac{i}{2}} \right]-f(x)~
\text{for~some}~f(x) \in S~|x|~\text{odd} 
\end{equation}

For the last row, $e_{2^{y_m},j}=1$ if \eqref{E3} holds and otherwise  $e_{2^{y_m},j}=0.$ For the last column, all entries are $0$ except  $e_{2^{y_m},2^{y_m}}=2.$  This is the result of the fact that the minimal and characteristic polynomials of the expanded matrix have one integer root $2$ 

The expanded rules matrix is then reduced in size by repeatedly deleting row $i$ and column $i,$ where row $i$ is a zero row, so that the output matrix and expanded rules matrix have the same characteristic and minimal polynomials. This process is continued until all zero rows are removed, and that final matrix is the rules matrix.  The Rules matrix is the rules matrix with the last row and last column removed.

It is important to keep in mind (as stated in Theorem \ref{thm2}) that the program places exactly two numbers (each is either $1$ or $-1$) in every row and column of the Rules matrix, and all other entries in the Rules matrix are zero, so the rules matrix is very sparse.

\end{document}